\documentclass[a4paper]{article}
\usepackage[top=1in, bottom=1in, left=1.25in, right=1.25in]{geometry}

\usepackage[english]{babel}
\usepackage[utf8]{inputenc}
\usepackage{amsmath,amssymb,amsthm}
\usepackage{algorithm,algorithmic}
\usepackage{graphicx}
\usepackage[colorinlistoftodos]{todonotes}
\usepackage{multirow}

\newcommand{\bx}{{x}}

\newcommand{\bigO}{\mathcal{O}}
\newcommand{\norm}[2]{\lVert #1 \rVert_{#2}}
\newcommand{\normtwo}[1]{\lVert #1 \rVert_2}
\newcommand{\define}{\stackrel{\text{def}}{=}}
\newcommand{\at}[2]{\left.#1\right|_{#2}}
\newcommand{\Span}[1]{\text{span}\left\{#1\right\}}
\newcommand{\krylov}[2]{{\cal{K}}_{#2}(#1)}

\newcommand{\hphi}{\hat{\phi}}
\newcommand{\hpsi}{\hat{\psi}}

\newtheorem{propos}{Proposition}

\title{A fast algorithm for parabolic PDE-based inverse problems based on Laplace transforms and flexible Krylov solvers}
\author{Tania Bakhos \thanks{taniab@stanford.edu} \thanks{Institute for Computational and Mathematical Engineering, Stanford University} \and
Arvind K. Saibaba \thanks{Department of Electrical and Computer Engineering, Tufts University} \and Peter K. Kitanidis \thanks{Department of Civil and Environmental Engineering, Stanford University}}

\begin{document}
\maketitle

\begin{abstract}

We consider the problem of estimating parameters in large-scale weakly nonlinear inverse problems for which the underlying governing equations is a linear, time-dependent, parabolic partial differential equation. A major challenge in solving these inverse problems using Newton-type methods is the computational cost associated with solving the forward problem and with repeated construction of the Jacobian, which represents the sensitivity of the measurements to the unknown parameters. Forming the Jacobian can be prohibitively expensive because it requires repeated solutions of the forward and adjoint time-dependent parabolic partial differential equations corresponding to multiple sources and receivers. We propose  an efficient method based on a Laplace transform-based exponential time integrator combined with a flexible Krylov subspace approach to solve the resulting shifted systems of equations efficiently. Our proposed solver speeds up the computation of the forward and adjoint problems, thus yielding significant speedup in total inversion time. We consider an application from Transient Hydraulic Tomography (THT), which is an imaging technique to estimate hydraulic parameters related to the subsurface from pressure measurements obtained by a series of pumping tests. The algorithms discussed are applied to a synthetic example taken from THT to demonstrate the resulting computational gains of this proposed method.
\end{abstract}

\section{Introduction}
Consider linear parabolic partial differential equations (PDEs) of the form, 
\begin{equation}
\mathcal{S}(x;s)\frac{\partial \phi}{\partial t}  - {\mathcal{A}(x;s) }\phi   =  f(x,t) \qquad 
\phi (t=0)  = \phi_0 \label{eq:main}
\end{equation}
along with the associated boundary conditions, where $s$ are the parameters to be estimated, $f(x,t)$ represents the source term, $\phi(x,t)$ is the state variable and $\mathcal{S}$ and $\mathcal{A}$ are time-independent operators. We are interested in fast solutions to Equation~\eqref{eq:main} particularly in the context of inverse problems~\cite{tarantola2005inverse}, i.e. using measurements $\phi$ to estimate parameters $s$, whose governing forward equations are described by Equation~\eqref{eq:main}. Problems of this kind arise in many applications such as Diffuse Optical Tomography~\cite{boas2001imaging, kilmer2006recycling} ( $\mathcal{A} = \nabla \cdot( D \nabla \cdot) - \nu \mu_a$), electromagnetic inversion ($\mathcal{A} = \nabla \times \mu^{-1}\nabla \times $)~\cite{zaslavsky2012large} and Transient Hydraulic Tomography ($\mathcal{A} = \nabla \cdot( \kappa \nabla \cdot)$)~\cite{cardiff2013hydraulic}. For example, in Diffuse  Optical Tomography, measurements of photon fluence are used to ``invert'' for diffusivity $D$ and the absorption coefficient $\mu_a$.

We will focus our attention on Transient Hydraulic Tomography (THT). THT is a method of imaging of the subsurface that uses a series of pumping tests to estimate important hydrological parameters, such as conductivity and storativity. In this method, water is injected continuously at a constant rate in injection wells and the resulting response in pressure change is recorded at measurement wells, until steady-state is approximately reached. The drawdown curves recorded by each measurement location are stored and from this data, a few key time sampling points are identified and the corresponding measured pressure data are used in an inversion algorithm to recover the aquifer parameters of interest (e.g. hydraulic conductivity and specific storage). Instead of solving for the groundwater equations in the traditional time-stepping formulation, we propose a Laplace transform-based solver. We emphasize that while we describe methods as related to THT, they can be extended to a large class of linear parabolic PDEs.

We focus our attention on methods based on successive linearization such as Gauss-Newton to solve the inverse problem. However, we note that other possible approaches include Ensemble Kalman Filters, particle filters, rejection samplers, Markov Chain Monte Carlo, for a good review please see~\cite{kaipio2006statistical}.  Popular approaches for solving  inverse problems based on time-dependent parabolic PDEs rely on traditional time-stepping methods (such as Crank-Nicolson) that require the solutions of linear systems of equations at each time step in order to implicitly march the solution in time. The time step is computed based on accuracy and stability requirements. This process is inherently sequential and is computationally expensive. A second challenge in the context of inverse problems solved using Newton-type methods is that the computation of the Jacobian or sensitivity using the adjoint state method requires storing the entire time history of the forward problem. To alleviate the memory costs, previous work has considered checkpointing methods to trade computational efficiency for memory~\cite{griewank2000algorithm,symes2007reverse,wang2009minimal}. Therefore, there is a need to develop a method that is accurate and efficient, both in terms of memory and computational costs.

For solving the problem that arises in the discretization of the forward problem, previous works have noted that applying the contour integral representation of Laplace transforms leads to a sequence of shifted systems of linear equations which can then each be solved for independently. This observation can be exploited to develop a parallel algorithm for time integration for parabolic PDEs~\cite{sheen2000parallel}  or an iterative Richardson method~\cite{sheen2003parallel}. In~\cite{in2011contour}, the application of Krylov subspace methods that take advantage of the shift-invariance of the Krylov subspace to simultaneously solve the shifted systems arising from the contour integral representation was considered. However, to the best of our knowledge, the application of these methods to inverse problems has not been performed.

\textbf{Contributions:} In this paper, we propose a Laplace transform-based exponential time integrator combined with a flexible Krylov subspace approach to speed up the computation of the forward and adjoint problems, thus yielding significant speedup in total inversion time. The modified Talbot contour is chosen to invert the Laplace transform and is discretized using the Trapezoidal rule to yield an exponentially convergent quadrature rule~\cite{trefethen2006talbot}. The resulting shifted systems of equations are solved using a flexible Krylov subspace solver that allows for multiple preconditioners to efficiently precondition the systems across the entire range of shifts~\cite{saibaba2013flexible}. This yields significant speedup in solving the forward problem.  We propose heuristics to pick preconditioners and demonstrate the robustness of the proposed solver with several parameters (variance of the underlying conductivity field, measurement time samples and grid discretization). We show that for a certain range of measurement times, the corresponding inverse Laplace transforms can be sped up simultaneously using shared preconditioners across all shifts allowing for additional speedup. 

Additionally, we derive a method for computing the Jacobian using the adjoint state technique based on the Laplace transform. Computing the Jacobian this way requires several solutions of shifted systems of equations. Using the solver developed in our previous work~\cite{saibaba2013flexible} allows for solving the forward (and adjoint) problem for multiple shifts at a cost that is comparable to the cost of solving a system of equations with a single shift. It should be noted that the fast computation of the Jacobian can be used not only to accelerate the geostatistical approach but also other techniques for inversion and data assimilation such as 3D Var, 4D Var filtering and Extended Kalman filter. An additional advantage for using the proposed solver to compute the Jacobian is that the rows corresponding to difference time points can be computed independently without  the need to store the entire time history. We demonstrate results using our method to accelerate an example problem in THT. 

\textbf{Outline}: The paper is organized as follows. We describe the governing equations for groundwater flow and the choice of contour used to approximate the inverse Laplace transform in Section \ref{sec:forward}. In Section~\ref{sec:krylov}, we discuss the flexible Krylov subspace solver for solving the resulting shifted linear systems of equations. In Section \ref{sec:results} we demonstrate the performance and robustness of the proposed solver under various test conditions. We follow this with a description of the geostatistical method for solving inverse problems and show an example taken from THT in Section \ref{sec:inversion}. Finally, in Section~\ref{sec:conc} we conclude with a summary and a discussion on future work. 

\section{Forward Problem}
\label{sec:forward}
\subsection{Transient hydraulic tomography}
In this work we will assume that flow in the aquifer of interest can be modeled as confined (with standard, linear elastic storage) or, if unconfined, can be
treated appropriately using a saturated flow model with the
standard linearized water table approximation~\cite{neuman1972theory}. The equations governing groundwater flow through an aquifer for a given domain $\Omega$ with boundary $\partial \Omega$ is composed of the union of three non-intersecting regions - $\partial\Omega_D$, $\partial\Omega_N$ and $\partial\Omega_w$ referring to Dirichlet, Neumann and linearized water table boundaries respectively. 
\begin{align} \label{eqn:timedomain}
S_s(x) \frac{\partial \phi(x,t)}{\partial t} - \nabla \cdot \left(\kappa(x) \nabla \phi(x,t)\right) & =  \quad  f(x,t), & x &\in \Omega \\ \nonumber
\phi(x,t) & = \quad 0, & x & \in \partial \Omega_D  \\ \nonumber 
\nabla \phi(x,t) \cdot {n} & = \quad 0, & x &\in \partial \Omega_N  \\
\nabla \phi(x,t) \cdot {n} & = \quad -S_y\frac{\partial \phi(x,t)}{\partial t}, & x &\in \partial \Omega_w \nonumber
\end{align}
where $\kappa(x)$  $[LT^{-1}]$ is the hydraulic conductivity, $S_s$  $[L^{-1}]$ is the specific storage, $f(x,t)$  $[T^{-1}]$ is the pumping source, $\phi(x,t)$ is the hydraulic head (pressure) and $S_y$ $[-]$ is the specific yield. In 2D aquifers, the terms $\kappa(x)$ $[L^2 T^{-1}]$ and $S_s(x)$ $[-]$ are known as transmissivity and storativity respectively. In this work, the pumping source is modeled as $f(x,t) = q(t)\delta(x-x_s)$, where $x_s$ corresponds to the location of the pumping source and $q(t)$ to the pumping rate. To derive the weak formulation, we multiply by appropriately chosen test functions $\psi$. Integrating by parts 
\begin{equation}
 (S_s\partial_t\phi, \psi)_{\Omega} +  (S_y \partial_t\phi, \psi)_{\partial\Omega_w}  + A(\phi,\psi)  = q(t)( \delta(x-x_s),\psi)_\Omega
 %+(S_s \phi_0, \psi)_\Omega - (S_y \phi_0\psi)_{\partial\Omega_w}  
 \end{equation}
where  $(u,v)_\Omega \define \int_\Omega uv d\bx $, $(u,v)_{\partial \Omega_w } \define \int_{\partial\Omega_w} uv d\bx$ and $A(u,v) \define \int_\Omega \kappa(\bx) \nabla u \cdot \nabla v d\bx$. In our work, we have used the standard linear finite element approach but extensions to higher order finite elements are possible. The above weak form is then discretized using finite elements as $\phi_h(t) = \sum_{j=1}^N \hat\phi_j(t)u_j(\bx)$ where $u_j(\bx)$ are the finite basis functions, to obtain the following semi-discrete system of equations
\begin{equation}
 M\partial_t\phi_h  + K\phi_h =  q(t)b  \qquad \phi_{h,0}\{ t=0 \} = \mathcal{P}_h\phi_0
\end{equation}
where the matrices $K$ and $M$ have entries $K_{ij} = A(u_j,u_i)$, $M_{ij} = (S_su_j,u_i)_\Omega + (S_yu_j,u_i)_{\partial\Omega_w}$ and the vector $b$ has entries $b_j = (\delta(x-x_s),u_j)_\Omega$. Furthermore, $\mathcal{P}_h$ is the orthogonal projector with respect to the $(\cdot,\cdot)_\Omega$ inner product. Taking the Laplace transform $\hat{\phi}(\cdot,z) = \int_0^\infty \phi(\cdot,t)e^{-zt} dt$, we have
\begin{equation}
(K+zM)\hat\phi_h(z) = \hat{q}(z)b + M\phi_{h,0}
\end{equation}
The field $\phi_h(t)$ can be recovered by applying the inverse Laplace transform on a carefully chosen contour using the formula
\begin{equation}
\phi_h(t) = \frac{1}{2\pi i}\int_\Gamma e^{zt} (K+zM)^{-1}\left(\hat{q}(z)b + M\phi_{h,0}\right) dz
\end{equation}
where $\hat{q}(z)$ denotes the Laplace transform of $q(t)$ and $\Gamma$ is an appropriately chosen contour. We will now discuss possible choices for $\Gamma$. 

\subsection{Choice of contour}\label{sec:contour} To compute the inverse Laplace transform, we will employ the modified Talbot contour 
\begin{equation} \label{eq:talbot}
\Gamma(\theta): \quad  z(\theta) = \sigma +\mu\left(\theta\cot\theta + \nu i\theta \right) \qquad -\pi \leq \theta \leq \pi 
\end{equation}
where $\theta$ is the contour discretization parameter. This contour was analyzed in \cite{trefethen2006talbot} with the contour parameters $\sigma$, $\mu$ and $\nu$ optimally chosen to improve the convergence of the quadrature scheme. The contour is a simple, closed curve that encloses both the eigenvalues of the generalized eigenvalue problem $Kx = \lambda Mx$ and the singularities of $\hat{q}(z)$.\footnote{In this work, we assume that $\hat{q}(z)$ does not have any singularities on the positive real axis. Otherwise, the  contour has to be adapted to account for these singularities. This is discussed in~\cite{in2011contour}.} In \cite{weideman2006optimizing}, it was shown that optimal convergence as $N_z \rightarrow \infty$ keeping $t$ fixed is achieved with $\sigma$ and $\mu$ proportional to the ratio $N_z/t$.

As in \cite{trefethen2006talbot}, we approximate the integral using the Trapezoidal rule on a uniform grid on $[-\pi,\pi]$ using evenly spaced points $\theta_k$ spaced by $2\pi/N_z$,
\begin{equation}
 \label{eqn:quadrature}
\phi_h(t) = \frac{1}{2\pi i}\int_{-\pi}^\pi e^{z(\theta)t}z'(\theta)F(z(\theta))d\theta  \approx \sum_{k = 1}^{N_z} e^{z(\theta_k) t}z'(\theta_k) F(z(\theta_k))
\end{equation}
where we defined $F(z) = (K+zM)^{-1}\left(M\phi_0 + \hat{q}(z)b \right)$. Further, we also define the weights of the quadrature $w_k = -\frac{i}{2N}e^{z(\theta_k) t}z'(\theta_k)$. Since $(K,M)$ is a Hermitian definite pencil and the contour is symmetric, we need only use half the points, 
\begin{equation} \phi_h(t)  \approx \sum_{k = 1}^{N_z} w_kF(z(\theta_k)) \quad = \quad  2\, \text{Real}\left\{ \sum_{k=1}^{N_z/2}w_k F(z_k) \right\}\label{eqn:half}\end{equation}
\begin{figure}[!ht]
\centering
\includegraphics[scale = 0.35]{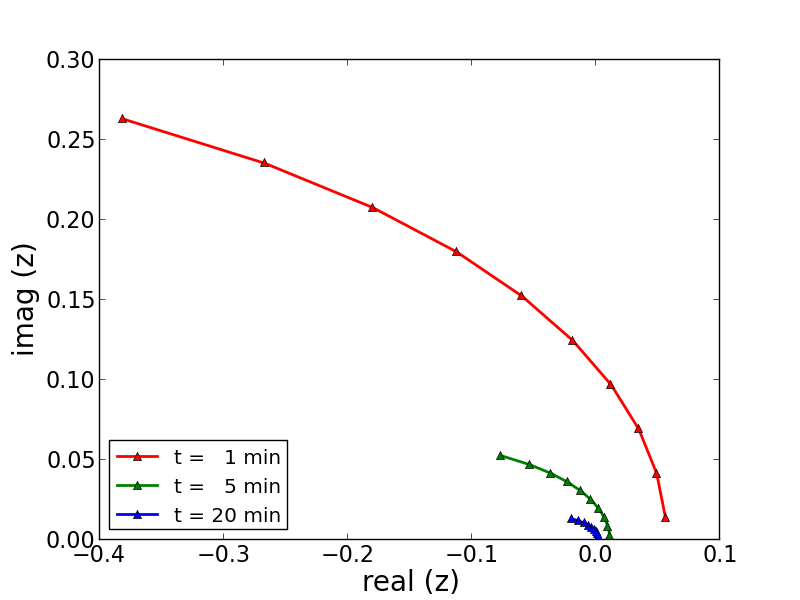}
\includegraphics[scale = 0.35]{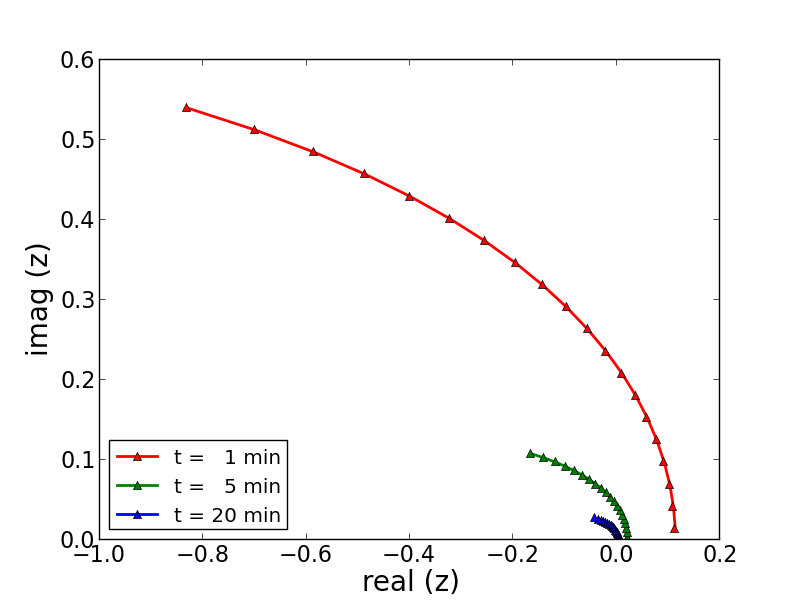}
\caption{Plot of contours corresponding to $N_z = 20$ (left) and $N_z = 40$ (right). Because of symmetry, only half the contour plot is shown. Here $N_z$ is the number of quadrature points in Equation~\eqref{eqn:half}} 
\label{fig:contours}
\end{figure}

An example of the contours for three different measurement times is shown in Figure \ref{fig:contours}. As mentioned earlier, in this paper we only consider sources $f(x,t) = q(t)\delta(x-x_s)$ but this method is applicable for all sources that are separable in space and time. This assumption is crucial since we intend to  use  Krylov subspace methods for shifted systems. This requires the right-hand sides to be independent of the shift (except, perhaps by a multiplicative factor). If the Laplace transform of $q(t)$ is known explicitly, it can be used. Otherwise, one may consider the use of Prony's method to fit an exponential sum $q(t) \approx \sum_j a_j \exp(-\lambda_j t)$ on $[0,T]$, for which the Laplace transform can be computed easily~\cite{sheen2003parallel}. Although in this case, the right hand side is dependent on $z_k$, the Krylov subspace method for shifted systems can still be applied.

Apart from the Talbot contour, other contours such as parabolas and hyperbolas have been proposed and analyzed. In addition, the connection between the Trapezoidal rule applied to approximate the integral and rational approximations to the exponential function have been pointed out in~\cite{trefethen2006talbot}. We would like to emphasize that our fast algorithm does not depend on the specific choice of contour.

\subsection{Accuracy and computational cost}\label{sec:costs}

The error between $\phi_{N_z,h}(t)$ and the true solution $\phi(x,t)$ in the $L_2(\Omega)$ norm can be bounded using the triangle inequality

\[
\|\phi(t) - \phi_{N_z,h}(t)\|\quad  \leq  \quad  \| \phi(t) - \phi_h(t)\| + \|  \phi_h(t) - \phi_{N_z,h}(t)\|
\]
The first term contributing to the error is due to the finite element discretization. As shown in Lemma $3.1$ in \cite{thomee2005high}, this error is 
\[  \| \phi(t) - \phi_h(t)\| \leq Ch^2 \left( \| \phi_0\| + \| \hat{f}\|_\Gamma\right) \]
where, $\| \hat{f}\|_\Gamma \define \sup_{z\in \Gamma} |\hat{f}(z) |$ and $C$ is a constant independent of $h$. The second contributing term is the error incurred by the discretization of the inverse Laplace transform and is dependent on the choice of parameters $\sigma, \mu$ and $\nu$. In summary, the total error is bounded by $\bigO(e^{-cN_z} + h^2)$. For the parameters chosen, $c=\log 3.89$ \cite{trefethen2006talbot}.

We now analyze the computational cost of the Laplace transform method and compare it to a standard time-stepping scheme. Let us assume that we need to compute the solution $\phi(\bx,t)$ at $N_T$ time-steps where $N_T = \bigO(1)$. For a given discretization, let the number of grid points be $N$ and let the cost of solving a linear system of equation be $\mu(N)$. The cost of computing the solution at $N_T$ points in time using the Laplace transform methods is $N_zN_T\mu(N)$ and using time-stepping schemes is $N_t\mu(N)$. Let us now analyze the cost for a desired tolerance $\varepsilon$. Assuming second-order discretization technique in time we have $C\Delta t^2 \sim \varepsilon$, so that $N_t = T/\Delta t \sim C'\varepsilon^{-1/2}$. However, for Laplace transform method the convergence is exponential, so the number of systems is $N_z \sim C'' |\log \varepsilon |$. Thus, for the same accuracy we expect that the Laplace transform-based method is much more efficient. Furthermore, as has been noted in~\cite{weideman2006optimizing,trefethen2006talbot,weideman2007parabolic} and several others, the system of equations can be solved independently for each shift and parallel computing resources can be leveraged for efficient solution of the Laplace transform since the computations are embarrassingly parallel.

Efficient methods have been proposed for solving the sequence of shifted systems arising from the discretization of the contour integral using the Trapezoidal rule. In~\cite{in2011contour}, the authors propose a Krylov subspace method for solving the shifted system of equations. However, there is a rich literature concerning the Krylov subspace methods for shifted systems (for a good review see~\cite{simoncini2007recent}). In practice, efficient preconditioners are needed to ensure convergence in a reasonable number of iterations. For this purpose, in this paper, we adapt the flexible Krylov subspace methods proposed in~\cite{gu2007flexible} and analyzed in~\cite{saibaba2013flexible}.

\section{Fast solvers for shifted systems}\label{sec:krylov}
Recall the approximation of the inverse Laplace transform as shown in Equation \eqref{eqn:quadrature}. It leads to a sequence of shifted systems at each time step, which can then each be solved independently. In particular, for each time step, we need to solve two shifted systems of equations, 

\begin{equation}
\left( K + z_k M \right) X_k = [b,\quad M \phi_0], \qquad k = 1, \dots, N_z/2 
\end{equation}
where $z_k$ are the (complex) shifts corresponding to the inverse Laplace transform contour. $F(z_k)$ is thus found by adding the solution to the first system of shifted systems to the second (times the corresponding $\hat{q}(z_k)$). In this section, we consider Krylov subspace methods for the solution of the first shifted systems of equations,                     
\begin{equation}
 \label{eqn:multipleshifted}
  \left( K + z_k M \right) x_k  = b, \qquad k =1,\dots, N_z/2
\end{equation}
It should be noted that the second system of equations can be solved for in a similar fashion. We assume that the systems are non-singular for all $k=1,\dots,N_z/2$. Recall that because of Equation~\eqref{eqn:half} only half the systems need to be solved. In Equation~\eqref{eqn:multipleshifted}, the matrices $K$ and $M$ and right-hand side $b$ are independent of the shifts $z_k$. Krylov based methods are particularly attractive for this class of problems because of their shift-invariant property. The main strategy of Krylov solvers is to build a Krylov basis and then to search for an approximate solution in the resulting reduced space.  The expensive step of constructing the Krylov basis needs to be performed only once since an approximation space is generated independently of the shifts. Once the basis is built, the subproblem for each shift can then be solved at a much reduced cost. We consider Arnoldi-based Krylov solvers, in particular the Full Orthogonalization Method (FOM) and the General Minimum RESidual Method (GMRES) \cite{saad2003iterative}. 

The number of iterations required by the Krylov subspace solvers for convergence can be quite large, particularly for problems arising from realistic applications. To reduce the number of iterations, we use right preconditioners of the form $K_{\tau} \define (K+\tau M)$ factorized and inverted using a direct solver. For preconditioners of this form and for $k = 1 \dots N_z/2$ it can be readily verified that, 

\begin{equation}\label{eqn:shiftinvariant}
 (K + z_k M) (K + \tau M)^{-1} = I + (z_k - \tau) M (K+\tau M)^{-1}
\end{equation}
It can be shown that $\krylov{MK_\tau^{-1},b}{m}$ is equivalent to the Krylov subspace $\krylov{(K+z_k M)(K+\tau M)^{-1},b}{m}$. This property is known as the shift-invariant property. The algorithm to solve multiple shifted systems using a single shift-and-invert preconditioner is as follows: a basis for the Krylov subspace $\krylov{MK_\tau^{-1},b}{m}$ is obtained by running $m$ steps of the Arnoldi algorithm on the matrix $M K_\tau^{-1}$ with the starting vector $b$. At the end of $m$ steps, the following Arnoldi relationship holds,
\begin{equation}
 \label{eqn:shiftedarnoldi}
 \left(K  + z_k M \right)K_\tau^{-1}V_m = V_{m+1} \underbrace{\left(\begin{bmatrix} I \\ {0}\end{bmatrix} + (z_k-\tau) \bar{H}_m  \right)}_{\define \bar{H}_m(z_k;\tau)} = V_{m+1}\bar{H}_m(z_k;\tau)
 \end{equation}
where $V_{m+1} = [v_1,\dots,v_{m+1}]$ is the Krylov basis and $\bar{H}_m$ is a $(m+1) \times m $ matrix formed by the Arnoldi process. We use the notation $H_m$ to denote the corresponding upper Hessenberg matrix, i.e. $H_m  = [ I ,\;  0 ] \bar{H}_m$. Having constructed the Arnoldi basis, the solution for each shift is obtained by searching for solutions $x_m(z_k) \in  \Span{K_\tau^{-1}V_m}$, written as $x(z_k) = K_\tau^{-1}V_my_m(z_k)$ where the vectors $y_m(z_k)$ can be obtained by either by imposing a Petrov-Galerkin condition, i.e. $r_m(z_k) \perp V_m$ which leads to the Full Orthogonalization Method for shifted systems (FOM-Sh) or by minimizing the residual norm $\normtwo{r_m(z_k)}$ over all possible vectors in the span of $K_\tau^{-1}V_m$ which leads to the GMRES method for shifted systems (GMRES-Sh). The residual is defined as 
\[r_m(z_k) \define b - (K+z_kM)x_m = V_{m+1}\left( \beta e_1 -\bar{H}_m(z_k;\tau) y_m(z_k) \right)\] 

Numerical evidence in~\cite{saibaba2013flexible} showed that a single preconditioner might not be sufficient to effectively precondition all systems, especially if the values of $z_k$ are spread out relative to the spectrum of the matrices. This is because a single preconditioner only adequately preconditions systems with $z_k$ close to $\tau$. In order to improve the convergence rate across all the shifts, we proposed a Flexible GMRES/FOM solver~\cite{saibaba2013flexible} for shifted systems that allows us the flexibility to choose different preconditioners (of the shift-and-invert type) at each iteration. This requires storing an additional set of vectors $U_m = [u_1,\dots,u_m]$. Consider the use of a preconditioner at each iteration of the form $K + \tau_j M$ where $j = 1, \dots, m$ and $j$ is is the iteration number. Systems with shifts $z_k$ close to $\tau_j$ converge faster. The procedure to solve the shifted systems of equations follows similarly to the single preconditioner case with the following modified Arnoldi relationship~\cite{saibaba2013flexible},

\begin{equation}
(K + z_k M)U_m = V_{m+1} \underbrace{\left( \begin{bmatrix} I \\ 0 \end{bmatrix}  + \bar{H}_m (z_kI_m - T_m) \right)}_{\define \bar{H}(z_k; T_m)} = V_{m+1}\bar{H}_m(z_k;T_m) \label{eqn:shiftedarnoldimod} 
\end{equation}
where $T_m = \text{diag}\{ \tau_1,\dots,\tau_m\}$. The columns of the matrix $U_m$ now no longer span a Krylov subspace but rather a rational Krylov subspace. As in the case of a single preconditioner $K_\tau^{-1}$, the approximate solution is constructed $x_m(z_k) = U_my_m(z_k)$, so that $x_m(z_k) \in \Span{U_m}$. Again, the coefficients $y_m(z_k)$ are either computed by using a Petrov-Galerkin condition, leading to the flexible FOM for shifted systems (FFOM-Sh) or by minimizing the residual, leading to flexible GMRES for shifted systems (FGMRES-Sh). The expensive step of computing the matrix $U_m$ is shared across all the systems and the cost of solving smaller subproblem (either by a Petrov-Galerkin projection or a residual minimization) for each shift is negligible compared to the cost of generating $U_m$. The algorithm is presented in detail in Algorithm~\ref{alg:flexible}.

\begin{algorithm}[!ht]
\begin{algorithmic}[1]

  \STATE Given $K$, $M$, $b$, \{$\tau_1, \dots, ,\tau_m$\}, $ \{z_1, \dots, z_{N_z/2}\} $ and a tolerance $tol$,
  
  \STATE $v_1 = b /\beta$, $k = 1$ and $\beta \define \normtwo{b}$
  \FORALL {$j = 1,\dots, \text{maxit}$}
  \STATE Solve $ (K+\tau_j M) u_j =  v_j$
  \STATE $s_j := Mu_j$
   \FORALL {$i=1,\dots,j$} 
    \STATE $h_{ij} := s_j^*v_i$
    \STATE Compute $s_j := s_j - h_{ij}v_j$
  \ENDFOR 
  \STATE $h_{j+1,j}:= \lVert s_j \rVert_2 $. If $h_{j+1,j} = 0$ stop 
  \STATE $v_{j+1} = s_j/ h_{j+1,j}$
   \STATE Define $U_j = [u_1,\dots,u_j] $, $V_{j+1} = [v_1,\dots,v_{j+1}]$ and $\bar{H_j} = \{h_{i,l} \}_{1\leq i \leq j+1, 1 \leq l \leq  j}$
   \STATE Construct $T_j = \text{diag}\{\tau_1,\dots,\tau_j\}$ 
  \FORALL {$k = 1,\dots,N_z/2$}
   \STATE If, not converged 
   \STATE Construct $\bar{H_j}(z_k;T_j)= \begin{bmatrix} I \\ 0 \end{bmatrix}  + \bar{H_j} (z_k I_j - T_j) $
   \STATE FOM: 
    \[  H_j(z_k; T_j)y_j^{fom} =  \beta e_1 , \quad \text{where } H_j(z_k; T_j) = [I, 0] \bar{H_j}(z_k; T_j)\]
   \STATE GMRES: 
    \[ y_j^{gm} \define \min_{ y_j \in \mathbb{C}^j} \normtwo{ \beta e_1 - \bar{H}_j(z_k;T_j) y_j }\] 
    \STATE Construct the approximate solution as $x_j(z_k) = U_jy_j(z_k)$
  \ENDFOR
\STATE If all systems have converged, exit
 \ENDFOR
\end{algorithmic}
\caption{Flexible FOM/GMRES for shifted systems}
\label{alg:flexible}
\end{algorithm}

In Algorithm~\ref{alg:flexible}, the use of a different preconditioner shift at each iteration would be too expensive since a different preconditioner $(K+\tau_jM)$ has to be factorized at each iteration. Numerical evidence shows that this is unnecessary since we need only pick a few systems with shifts $\tau$ such that they effectively precondition the systems over the entire range of shifts. The rule of thumb is that systems with shifts $z_k$ close to $\tau$ converge first. In our previous work \cite{saibaba2013flexible}, we considered the solution of shifted systems with $200$ shifts that were on the pure imaginary axis.  We observed that the systems with shifts closer to the origin converged more slowly. Therefore, to speed convergence, we chose $5$ preconditioners with shifts $\tau$ that were evenly spaced on a log scale. The choice of shifts and number of preconditioners is entirely application dependent. For the case of the shifts determined by the modified Talbot contours as shown in Figure \ref{fig:contours}, numerical experiments in Section~\ref{sec:results} show the use of two preconditioners produces more favorable results compared to only a single preconditioner. The shifts are chosen as follows: the first shift corresponds to $z_k$ with the smallest real part called $\bar\tau_1$, and the second shift corresponds to the shift with the largest real part called $\bar\tau_2$. The sequence $\tau_j$ for $j=1,\dots,m$ is constructed as follows. We choose $\tau_j = \bar\tau_1$ for $j=1,2,3$ and $\tau_j = \bar\tau_2$ for $j=4,5$. This procedure is repeated until all the systems converge to the desired user-defined tolerance. This choice of preconditioners results in fast convergence for the range of parameters we have explored. Future work will focus on the optimal choice of preconditioners.

\subsection{Properties of approximation space}
In this section, we analyse the convergence of the flexible Krylov subspace approximation to the discretized representation of the contour integral. The error of the Krylov subspace approach in Equation~\eqref{eqn:shiftedarnoldimod} is defined by~\cite{popolizio2008acceleration,lopez2006analysis} as 
\begin{equation}
e_m \define \sum_{k=1}^{N_z} w_k \left( (K + z_kM)^{-1}b  - x_m\right) = \sum_{k=1}^{N_z} w_k \left( (K + z_kM)^{-1}b - U_m H_m^{-1}(z_k;T_m)\beta e_1 \right) 
\end{equation}
 where $x_m = U_m H_m^{-1}(z_k;T_m)\beta e_1$ is the approximate projected solution. Similarly, we can define the residual vector 
\begin{equation} r_m \define = sum_{k=1}^{N_z} w_k \left( b - (K+z_k M)x_m\right) =  \sum_{k=1}^{N_z} w_k \left( b - (K+z_k M)U_mH_m^{-1}(z_k;T_m)\beta e_1 \right) \end{equation}
It can be readily verified that the approximate solution $x_m \define \sum_k w_k U_m H_m^{-1}(z_k;T_m)\beta e_1$ lies in the subspace $\Span{U_m}$ where 
\begin{equation}\label{eqn:rationalkrylov}
\Span{U_m} = \Span{ v_1, T_1v_1,\dots,(T_{m-1}\cdots T_1)v_1 }, \text{
with } T_k = (K+\tau_k M)^{-1}
\end{equation}
To simplify the analysis, let us make the following change of variables: $b \leftarrow M^{-1/2}b$, $A \leftarrow M^{-1/2}KM^{-1/2}$ and $x \leftarrow M^{1/2}x$. This variable change is possible since $M$ is a mass matrix and therefore, positive definite.

The projective space $U_m$ is independent of the shift $z_k$ and is not a typical Krylov subspace but rather a rational Krylov subspace.  Krylov subspaces generate sequences of vectors that are of polynomial form, i.e., $v_{m+1} = p_m(A) v_1$ where $p(\cdot)$ is a polynomial. On the other hand, rational Krylov subspace generates iterates that can be expressed as rational functions of the matrix, i.e. $v_{m+1} = y_{m-1}(A)z_m(A)^{-1}v_1$, where $y_{m-1}$ is a polynomial of degree at most $m-1$ and $z_m$ is a fixed polynomial of degree $m$. The rational Krylov method was originally proposed by
Ruhe~\cite{ruhe1984rational} in the context of approximating interior eigenvalues, with which appropriately chosen shifts could potentially accelerate convergence of the eigenvalues in the desired region of the spectrum. They have been quite successful in the context of model reduction, in order to approximate the transfer function for a dynamical system~\cite{druskin2011adaptive}. The rational approach requires either knowledge of the shifts $\tau_k$ a priori, or this needs to be computed on-the-fly. In this work, our choice of shifts is done a priori and is based on heuristics that perform well in the applications, as we will demonstrate. In the context of $\mathcal{H}_2$-optimality reduction, an automated choice of shifts has been proposed in~\cite{druskin2011adaptive}. The following result characterizes the properties of the error due to the flexible Krylov approach and derives an expression for the rational function that defines the rational Krylov subspace. 

\begin{propos}
The error $e_m$ satisfies
\begin{equation} e_m \in \Span{(A+z_{1}I)^{-1}v_{m+1}, \dots,(A+z_{N_z}I)^{-1}v_{m+1}} \label{eqn:span}\
\end{equation}
and can be expressed in terms of a rational function as 
\begin{equation} e_m = r(A)v_1 \qquad 
r(\lambda) \define \sum_{k=1}^{N_z}\frac{w_k}{\lambda + z_k}\frac{\text{det}(G_m - \lambda H_m)}{\prod_{j=1}^m(\lambda+\tau_j)} \label{eqn:rational}\end{equation}
where $G_m = I - H_mT_m$ Further, the error can be bounded as 
\begin{equation} \normtwo{e_m} \quad \leq   \quad \sum_{k=1}^{N_z/2}  |w_k| \normtwo{(A+z_kI)^{-1}v_{m+1}} |\eta_k|
\end{equation}
where $\eta_k \define h_{m+1,m} (z_k - \tau_m) e_m^*H_m^{-1}(z_k;T_m)\beta e_1$.
\end{propos}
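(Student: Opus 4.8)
The plan is to carry out the whole computation in the transformed coordinates introduced just before the statement, so the shifted systems read $(A+z_kI)x_k=b$, the preconditioners become $(A+\tau_jI)^{-1}$, and the flexible Arnoldi relation \eqref{eqn:shiftedarnoldimod} reads $(A+z_kI)U_m=V_{m+1}\bar H_m(z_k;T_m)$ with $\bar H_m(z_k;T_m)=\begin{bmatrix}I\\0\end{bmatrix}+\bar H_m(z_kI_m-T_m)$. The first step is to identify the FOM residual of each individual shifted system. Substituting $x_m(z_k)=U_mH_m^{-1}(z_k;T_m)\beta e_1$ and using that the bottom row of $\bar H_m(z_k;T_m)$ has the single nonzero entry $h_{m+1,m}(z_k-\tau_m)$ (the Hessenberg pattern of $\bar H_m$ together with the diagonal $T_m$ annihilate the rest), a direct computation gives $r_m(z_k)\define b-(A+z_kI)x_m(z_k)=V_{m+1}(\beta e_1-\bar H_m(z_k;T_m)H_m^{-1}(z_k;T_m)\beta e_1)=-\eta_k v_{m+1}$, with $\eta_k$ exactly the scalar in the statement. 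This is the rational-Krylov analogue of the classical fact that the FOM residual is a multiple of $v_{m+1}$.

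With this in hand the span assertion \eqref{eqn:span} is immediate: the $k$-th summand of $e_m$ is $w_k((A+z_kI)^{-1}b-x_m(z_k))=w_k(A+z_kI)^{-1}r_m(z_k)=-w_k\eta_k(A+z_kI)^{-1}v_{m+1}$, so $e_m=-\sum_{k=1}^{N_z}w_k\eta_k(A+z_kI)^{-1}v_{m+1}$, which lies in $\Span{(A+z_1I)^{-1}v_{m+1},\dots,(A+z_{N_z}I)^{-1}v_{m+1}}$. The stated norm bound then follows by applying the triangle inequality to this expression together with the conjugate symmetry of the contour, so that the contributions of $z_k$ and $\bar z_k$ have equal modulus and the sum collapses onto $k=1,\dots,N_z/2$.

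The substantive step is the closed form \eqref{eqn:rational}, for which I must write $v_{m+1}$ and $\eta_k$ explicitly in terms of the data. I would split \eqref{eqn:shiftedarnoldimod} into its two shift-independent pieces: the Arnoldi recurrence of Algorithm~\ref{alg:flexible} (with the mass matrix now the identity) gives $U_m=V_{m+1}\bar H_m$, while the preconditioner solves $(A+\tau_jI)u_j=v_j$ give $AU_m=V_{m+1}\bar G_m$ with $\bar G_m\define\begin{bmatrix}I\\0\end{bmatrix}-\bar H_mT_m$, whose leading $m\times m$ block is precisely $G_m=I-H_mT_m$. Combining these as $AV_{m+1}\bar H_m=V_{m+1}\bar G_m$ and peeling off the last column, an induction on $j$ (using $u_j=(A+\tau_jI)^{-1}v_j$) shows $v_{j+1}=\chi_j(A)(\prod_{i=1}^{j}(A+\tau_iI))^{-1}v_1$ for a polynomial $\chi_j$ of degree $j$; identifying $\chi_m$, up to a scalar, with $\det(G_m-\lambda H_m)$ is the rational analogue of the fact that in ordinary FOM/GMRES $v_{m+1}$ is a scalar multiple of $p(A)v_1$ with $p$ the characteristic polynomial of $H_m$, and it comes out of a cofactor expansion of the above relation. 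In parallel, Cramer's rule applied to $H_m(z_k;T_m)=G_m+z_kH_m$ yields a closed form for the last component of $H_m^{-1}(z_k;T_m)\beta e_1$, hence for $\eta_k$; here one uses that the relevant cofactor matrix is upper triangular with diagonal entries $h_{i+1,i}(z_k-\tau_i)$. Feeding both closed forms into $e_m=-\sum_k w_k\eta_k(A+z_kI)^{-1}v_{m+1}$ and collecting the polynomial and partial-fraction pieces produces $e_m=r(A)v_1$.

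I expect this last determinant bookkeeping to be the main obstacle: fixing the numerator polynomial as $\det(G_m-\lambda H_m)$ with the correct sign and scaling, and checking that the $k$-dependent constants coming from $\eta_k$ and from the normalization of $v_{m+1}$ assemble into exactly the rational function displayed in \eqref{eqn:rational}. Everything upstream of that point is the routine projection-and-residual algebra already used for the single-preconditioner case.
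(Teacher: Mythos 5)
Your proposal follows essentially the same route as the paper's own proof: you show each shifted FOM residual is $-\eta_k v_{m+1}$ via the modified Arnoldi relation, deduce $e_m=-\sum_k w_k\eta_k(A+z_kI)^{-1}v_{m+1}$ for the span claim and the norm bound, and then characterize $v_{m+1}$ as a rational function of $A$ from the two shift-independent relations $U_m=V_{m+1}\bar{H}_m$ and $AU_m+U_mT_m=V_m$, exactly as the paper does following Ruhe. If anything, you are more explicit than the paper about the determinant identification $\det(G_m-\lambda H_m)$ and the $\eta_k$/normalization bookkeeping, which the paper's displayed $r(\lambda)$ glosses over; that extra care is warranted rather than a deviation.
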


\begin{proof}

Let us begin by writing 
\[ e_m = \sum_{k=1}^{N_z} w_k (A+z_kI)^{-1} \left(b - (A+z_kI)U_m H_m^{-1}(z_k;T_m)\beta e_1 \right) \]
Using the modified Arnoldi relation in equation~\eqref{eqn:shiftedarnoldimod}
\begin{equation} b - (A+z_kI)U_m H_m^{-1}(z_k;T_m)\beta e_1 = b - V_{m+1}\bar{H}_m(z_k;T_m)H_m(z_k;T_m)^{-1}\beta e_1 = - v_{m+1}\eta_k \end{equation}
since $b = V_m \beta e_1$ and $\eta_k$ as defined above. Therefore, we have 
\[ e_m = - \sum_{k=1}^{N_z} w_k \eta_k (A+z_kI)^{-1}v_{m+1}  \]
This proves the part of the proposition in Equation~\eqref{eqn:span} the Arnoldi relations $U_m = V_{m+1}\bar{H}_m$ and $AU_m + U_m T_m = V_m$. Eliminating $U_m$ we have $AV_{m+1}\bar{H}_m + V_{m+1}\bar{H}_mT_m = V_m$. 
The general recurrence for the vector $v_m$ can be written as 
\[ ( A + \tau_m I)v_{m+1} h_{m+1,m} = v_j - \sum_{j=1}^m \left(Av_jh_{jm} + v_jh_{jm}\tau_m \right)\]
The individual vectors $(A+z_kI)^{-1}v_{m+1}$ can be expressed in terms of rational functions and n convergence this yields $h_{m+1,m} = 0$ following the arguments of~\cite{ruhe1984rational} we get that 
\[ v_{m+1} = r(A)v_1 \quad \text{where} \quad r(\lambda) \define \frac{I - H_m T_m - \lambda H_m}{(\lambda+\tau_1)\cdots(\lambda+\tau_m)}  \]
Plugging in this expression into Equation~\eqref{eqn:rational} we get the desired result. The inequality for the error follows from the properties of vector norms.

\end{proof}
We also have that the residual satisfies $r_m = - \sum_{k=1}^{N_z} w_k \eta_k  v_{m+1} $ and therefore, $\normtwo{r_m} \leq \sum_k|w_k| |\eta_k|$. The quantity $|\eta_k| = |h_{m+1,m} (z_k - \tau_m) e_m^*H_m^{-1}(z_k;T_m)\beta e_1|$ is established as an a posteriori estimate of error of the flexible Krylov approach~\cite{saibaba2013flexible}. The result in Equation~\eqref{eqn:rational} suggests that an efficient choice of shifts $\tau_k$ for the preconditioners may result in a smaller error $e_m$ after $m$ iterations. In~\cite{saibaba2013flexible} analysis is provided for when the a posteriori measure is small and showed that it is related to the convergence of the eigenvalues of the matrix $KM^{-1}$ (or alternatively, $M^{-1/2}KM^{-1/2}$). Although we do not have an automated procedure to determine the shifts, empirical evidence supports that our heuristic choice of preconditioner shifts $\tau_k$ leads to fast convergence for a wide range of parameters of the underlying PDE. This is discussed in the following section. 

\section{Numerical Experiments}
\label{sec:results}
\subsection{Problem set-up}
In this section we demonstrate using numerical examples the computational gains of the flexible Krylov solver and its robustness to changing various parameters in the forward problem. We consider a 2D depth-averaged aquifer with horizontal confining layers in a square domain satisfying the differential equation~\eqref{eqn:timedomain} with zero Dirichlet boundary conditions on all boundaries. The equations are discretized using standard linear finite elements implemented using FEniCS ~\cite{LoggMardalEtAl2012a, LoggWells2010a, LoggWellsEtAl2012a} using Python as the user-interface. The domain size is $[0,L]^2$ where $L= 100 $ [m].

For our numerical experiments, we consider two transmissivity fields : the first is a random field generated using an exponential covariance kernel, i.e. \begin{equation}\kappa(x,y) = \theta \exp( -r) \label{eqn:exp} \end{equation} with $r=\normtwo{x-y}/L$. For the second transmissivity field we use a scaled version of Franke's function~\cite{franke1979critical} which is a smooth function obtained by the weighted sum of four exponentials. The natural log of the fields are displayed in Figure~\ref{examplefields}.  The exponential field is rougher than the Franke field and as a result we expect the iterative solver to work hard to converge to the desired tolerance. In both cases, the mean transmissivity was chosen to be $\mu_{K} = 10^{-4}$ [m$^2$/s] and in these examples storativity was chosen to be constant with $S_s = 10^{-5}$ [-]. We consider one pumping source located at $(50,50)$ pumping at a constant rate of $0.85$ L/s.

 We compare the results of two different solvers - `Single' corresponds to the Krylov solver with a single preconditioner $(K+\tau M)$ where, $\tau = \arg\min \text{Real}(\sigma_k)$  for $k = 1,\dots, N_z$, i.e. the shift corresponding to the smallest real part and `Flexible' using the flexible Krylov approach described in section~\ref{sec:krylov}. The procedure to choose the shifts for the preconditioners is also described there.

\begin{figure}
\centering
\includegraphics[scale = 0.35]{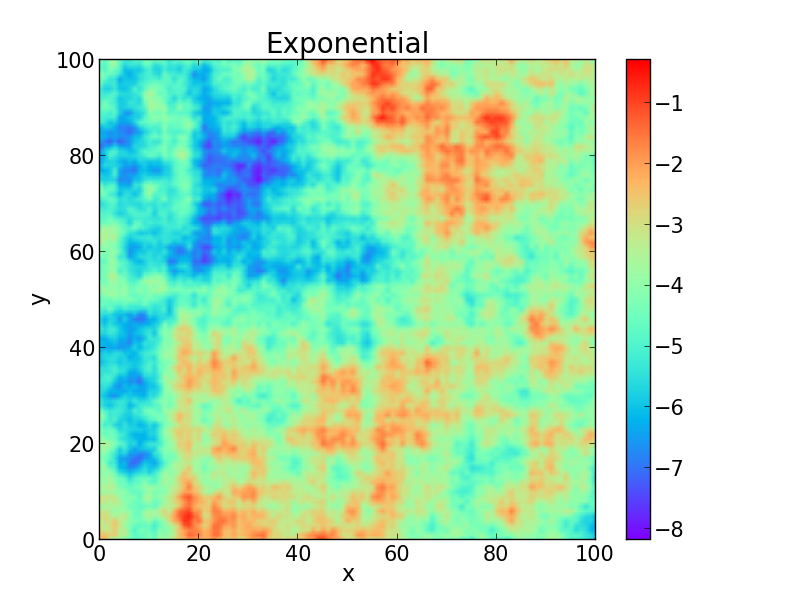}
\includegraphics[scale = 0.35]{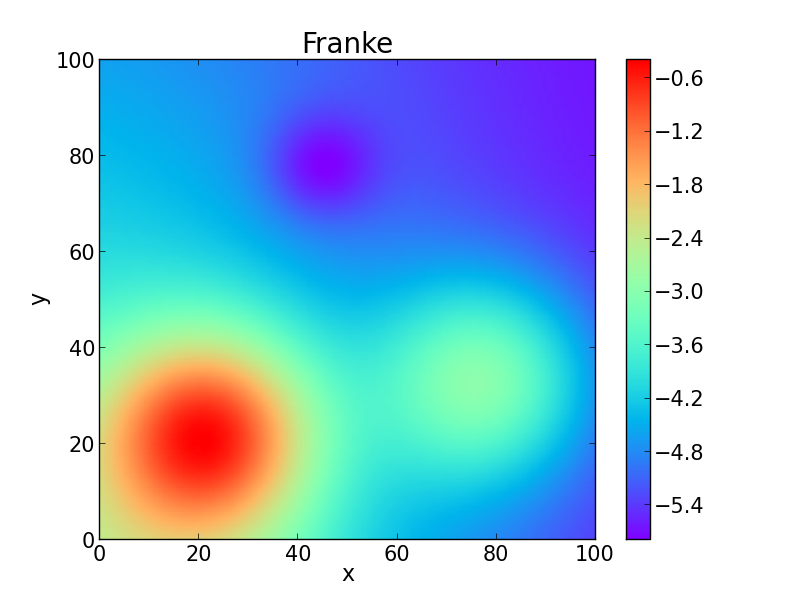}

\caption{Example log transmissivity fields generated by (left) sample drawn from a random field with exponential covariance kernel  and (right) scaled version of Franke's function. The variance for both fields in the figure is $1.6$.} \label{examplefields}
\end{figure}

\subsection{Results}

To demonstrate the robustness of our solver, we test it with respect to different parameters - variance of the field, number of times at which the solution needs to be computed and the number of grid points in the domain. First, we vary the variance of the log transmissivity field and note that higher variances correspond to more ill-conditioned systems. We report the number of iterations taken and the CPU time required for the solvers to converge to a relative tolerance of $10^{-10}$. The number of shifts $N_z$ was chosen to be $40$ and therefore the number of systems required to be solved is $N_z/2 = 20$. The results are displayed in Table~\ref{table:variance}. The last column in the table is an estimate of the maximum condition number of $K+\sigma M$ across all shifts. This is a lower bound to the condition number in the $1$-norm \cite{hager1984condition, higham2000block}. We see that the fields corresponding to larger variances have more ill-conditioned systems. Since Franke field is smoother than the random field, it is expected that the resulting linear systems are less ill-conditioned and therefore, it would take fewer number of iterations to converge to the desired tolerance. Furthermore, the number of iterations required by both `Single' and `Flexible' preconditioning increases with increasing variance, however the number of iterations using `Flexible' preconditioning does not increase significantly. By contrast, the use of `Single' preconditioning is insufficient because it does not adequately precondition all the systems and the cost can grow significantly.

\begin{figure}
\centering
\includegraphics[scale = 0.4]{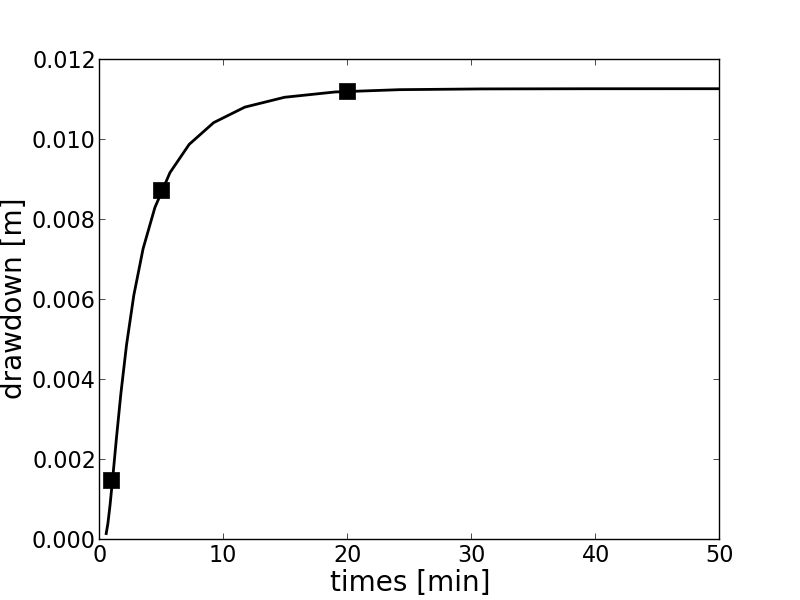}
\caption{Drawdown curve measured at location $(70,70)$ with the source at $(50,50)$ pumping at a constant rate of $0.85$ L/s. The squares indicate sampling times $1$, $5$ and $20$ min that were used in the results in Table \ref{table:times}.} \label{fig:drawdown}
\end{figure}

\begin{table}[!ht]
\begin{center}
\begin{tabular}{|c|c|c|c|c|l|}\hline
\multicolumn{6}{|c|}{Franke Field} \\ \hline
\multirow{2}{*}{Variance} & \multicolumn{2}{|c|}{Single} & \multicolumn{2}{|c|}{Flexible} & \multirow{2}{*}{$\kappa_{\text{pseudo}}$ }\\  \cline{2-5}
& Iter. & CPU Time [s] & Iter. & CPU Time [s] & \\ \hline
0.8 & 73  & 8.6 & 40 & 4.6  & $ 1.2 \times 10^{6}$\\ \hline 
1.6 & 95  & 13.0 & 48 & 5.8  & $ 1.5 \times 10^{7}$\\  \hline
3.5 & 125 & 21.3 & 54 & 7.1  & $ 1.3 \times 10^{9}$\\ \hline 
\multicolumn{6}{|c|}{Random Field} \\ \hline
& Iter. & CPU Time [s] & Iter. & CPU Time [s] & \\ \hline
0.8 & 89  & 11.6  & 45 & 5.5  & $ 1.1 \times 10^{6} $\\ \hline 
1.6 & 126  & 20.1 & 53 & 6.8  & $ 9.9 \times 10^{6} $\\  \hline
3.5 & 139 & 28.9 & 54 & 7.3  & $ 9.8 \times 10^{8} $\\ \hline

\end{tabular}
\caption{Iteration count and CPU time  for solving the system of equations~\eqref{eqn:multipleshifted}. The solution was computed at measurement time = $5$ min. Two different log transmissivity fields were considered. An estimate for condition number shows that the fields with higher variance corresponded to larger condition number. The proposed `Flexible' solver performs better than `Single', both in terms of iterations and CPU time. } \label{table:variance}
\end{center}
\end{table}

Next, we consider the performance of the solvers in computing the solution at three different measurement times. The number of iterations and the time required by the solver for solving for the hydraulic head at various times is computed and displayed in Table~\ref{table:times}. It is observed that the number of iterations required for the systems corresponding to the early times to converge are larger, even though the condition number decreases. The reason for the increase in iterations can be explained by examining the the contours obtained from  different measurement times (Figure \ref{fig:contours}). It is the presence of the scaling factor $1/t$ multiplying the shifts $z_k$ that cluster the shifts for larger times, see Equation \eqref{eq:talbot}. The clustering of the shifts allows for faster convergence because it allows for better preconditioning (both `Single' and `Flexible') of all the systems since the systems with shifts closer to $\tau$ will converge more rapidly than those further away. This is consistent with the analysis done in \cite{saibaba2013flexible}.

\begin{table}[!ht]
\begin{center}
\begin{tabular}{|c|c|c|c|c|l|}\hline
\multicolumn{6}{|c|}{Franke Field} \\ \hline
\multirow{2}{*}{Meas. Time [min]} & \multicolumn{2}{|c|}{Single} & \multicolumn{2}{|c|}{Flexible} & \multirow{2}{*}{$\kappa_{\text{pseudo}}$ }\\  \cline{2-5}
& Iter. & CPU Time [s] & Iter. & CPU Time [s] & \\ \hline
1 & 134  & 22.6 & 55 & 7.8  & $ 4.3 \times 10^{6} $\\ \hline 
5 & 95  & 13.0 & 48 & 5.8  & $ 1.5 \times 10^{7} $\\  \hline
20 & 58 & 5.8 & 33 & 3.4  & $ 5.6 \times 10^{7} $\\ \hline 
\multicolumn{6}{|c|}{Random Field} \\ \hline
& Iter. & CPU Time [s] & Iter. & CPU Time [s] & \\ \hline
1 & 145  & 26.2  & 59 & 8.3 & $ 3.0 \times 10^{6} $\\ \hline 
5 & 126  & 20.1 & 53 & 6.8  & $ 9.9 \times 10^{6} $\\  \hline
20 & 92 & 12 & 45 & 5.4  & $ 3.8 \times 10^{7} $\\ \hline

\end{tabular}
\caption{Iteration count and time for for solving the system of equations~\eqref{eqn:multipleshifted}. Two different log transmissivity fields were considered and the solution was computed at different measurement times. Here, variance of the log transmissivity was fixed to be $1.6$. An estimate for condition number shows that the systems at earlier measurement time  corresponded to larger condition number. The proposed `Flexible' solver performs better than `Single', both in terms of iterations and CPU time.} \label{table:times}
\end{center}
\end{table}
Thus far, we have only looked at solving for each time point independently, however our solver allows for simultaneously solving for the solutions at multiple times. We consider $40$ uniformly spaced time points between $40$ min and $60$ min. These measurements correspond to `late-time' measurements (see Figure~\ref{fig:drawdown}). These systems are solved simultaneously using `Flexible' preconditioning. To pick the preconditioner shifts, we first sort all the shifts arising from the discretized contour from the $40$ time instances by their real parts and we choose two preconditioners $\tau_1$ corresponding to the overall shift with the largest real part and $\tau_2$ corresponding to the overall shift with the smallest real part. We observe that the maximum number of iterations was $27$ across all shifts and all the $40$ time sample points, which  is roughly the same cost of solving for one single time, i.e., the computational cost associated with solving the problem for multiple times is not significantly higher than the cost of solving the system of equations for a single time. More investigation is needed to extend the solvers to be able to compute the entire time history simultaneously.

\begin{table}[!ht]
\begin{center}
\begin{tabular}{|c|c|c|c|c|}\hline
\multirow{2}{*}{Grid size} & \multicolumn{2}{|c|}{Single} & \multicolumn{2}{|c|}{Flexible} \\ \cline{2-5}
& Iter. & Time [s] & Iter. & Time [s] \\ \hline
$41^2$ & 96  & 3.6 & 49 & 1.5  \\ \hline 
$101^2$ & 95  & 13.0 & 48 & 5.8  \\  \hline
$201^2$ & 94 & 52.8 & 45 & 25.1  \\ \hline 
$301^2$ & 92 & 260.2 & 44 &  116.2 \\ \hline
$401^2$ & 91 & 611.7 & 44 & 268.1 \\ \hline

\end{tabular}
\caption{Iteration count and time respectively for computing pressure field using increasing number of grid points. The log transmissivity field is Franke field with a variance of $1.6$.} \label{table:grid}
\end{center}
\end{table}

All of the above experiments have been conducted on a grid of size $101^2$. To show the independence of the iteration count with respect to the grid size, we consider grids ranging from $41^2$ to $401^2$. In all the experiments, the log transmissivity field is the Franke field with a variance of $1.6$. Table~\ref{table:grid} lists the iteration count of the solver with increasing grid size. For both the solvers `Single' and 'Flexible', the iteration count is independent of the number of grid points. 
 
In summary, we can conclude that our solver is robust to various parameters such as field smoothness and variance, as well as the computation of the solutions at different measured times. Additionally, for `late enough' times, the proposed flexible solver further improves efficiency as it allows for the solution of the resulting systems of equations simultaneously. Therefore, this solver can be used within the framework of inverse problems to achieve significant speedup.

\section{Inverse Problem}
\label{sec:inversion}

\subsection{Geostatistical approach}

We consider the geostatistical approach as described in \cite{kitanidis1995quasi, kitanidis2010bayesian}, which is one of the prevalent approaches for solving the inverse problem. The objective is to determine aquifer properties (here log transmissivity) given discrete head measurements. In the geostatistical approach the unknowns are modeled as Gaussian random fields and the Bayesian approach is used to infer the posterior probability density function of the unknowns as the product of two parts: the likelihood of the measured data or the `data misfit' and the prior distribution of the parameters which represents the assumed spatial correlation of the parameters. Denote by $s(\bx) \in \mathbb{R}^{N_s}$ the vector corresponding to the discretization of the function to be estimated with $s \sim {\cal{N}}(X\beta,Q)$ where $X$ is a $N_s \times p$ known matrix, $\beta$ are $p$ unknown drift coefficients and $Q$ is a covariance matrix with entries $Q_{ij} = \kappa(x_i,x_j)$ where $\kappa$ is a generalized covariance kernel. The covariance kernel contains information about the degree of smoothness of the random field and the correlation between two points. Popular choices of covariance kernels include the Mat\'ern covariance family, which includes the exponential and Gaussian covariance kernels~\cite{rasmussen2006gaussian}. The measurement equation is expressed as, 

\begin{equation}
y = h(s) + \epsilon \quad \epsilon \sim {\cal{N}}(0,R)
\end{equation}
where $y \in \mathbb{R}^{N_y}$ denotes the hydraulic head measurements and $\epsilon$ represents the measurement error which includes both the error resulting from data collection and the errors in evaluating $h(s)$. The matrices $Q,R$ and $X$ are structural parameters whose values can be optimized using a restricted maximum likelihood approach. More details on the choice of these parameters can be found in \cite{kitanidis1995quasi}. The best estimate is obtained by computing the maximum a posteriori estimate (MAP). This is equivalent to solving the following optimization problem,
\[ \underset{\hat{s},\hat{\beta}} {\arg \min} \quad \frac{1}{2} \norm{y-h(s)}{R^{-1}}^2 + \frac{1}{2}\norm{s - X\beta}{Q^{-1}}^2\]
where, the objective function to be minimized is the negative logarithm of the posterior probability density function $p(s,\beta|y)$. To solve the nonlinear optimization problem, Gauss-Newton algorithm is used. The procedure is described in Algorithm~\ref{alg:quasi}.

\begin{algorithm}[!ht]
\begin{algorithmic}
\WHILE {not converged}
\STATE  Compute the $N_y \times N_s$ Jacobian $J$ as, 
\begin{equation} 
 J_k = \at{\frac{\partial{h}}{\partial{s}}} {s = {s}_k} 
\end{equation}
\STATE  Solve the following system of equations, 

\begin{equation} \label{eq:inversion1}
\left( \begin{array}{cc} 
        J_k Q J_k^T +R  & J_kX \\
	 \left(J_k X\right)^T & 0 
       \end{array}
       \right) 
       \left( \begin{array}{c} \xi_{k+1} \\ \beta_{k+1} \end{array} \right) = 
\left( \begin{array}{c} y - h({s}_k) + J_k s_k\\ 0 \end{array} \right) 
\end{equation}

\STATE $s_{k+1}$ is computed by, 
\begin{equation} 
 s_{k+1} = X \beta_{k+1} + Q J_k^T \xi_{k+1} 
\end{equation}

\ENDWHILE
\end{algorithmic}
\caption{Quasi-linear Geostatistical approach}
\label{alg:quasi}
\end{algorithm}

Every iteration in algorithm~\ref{alg:quasi} requires the computation matrices $J_kQJ_k^T$ and $Q J^T_k$. Storing the dense covariance matrix $Q$ while computing $Q J_k^T$ can be expensive, both in terms of memory costs for storage and computational costs, particularly when the number of grid points is large. For covariance matrices that are translation invariant or stationary, the associated cost for storing $Q$ and computing matrix vector products $QJ_k^T$ can be reduced. A Fast Fourier Transform (FFT) method can be used if the grid is regular. If the grid is not regular, the Hierarchical matrix approach can be used \cite{saibaba2012efficient}. The resulting computations are $\bigO(N_yN_s\log N_s)$ instead of $\bigO(N_yN_s^2)$.

\subsection{Sensitivity computation}

Another step that can be very expensive in the geostatistical method for inversion is the computation of the Jacobian, or sensitivity matrix. When traditional time stepping algorithms are used for simulating the time-dependent forward problem, computing the sensitivity of the measurement operator with respect to parameters to be inverted for is accomplished by the adjoint state method. This involves cross-correlation of two fields at the same time, one obtained by forward recursion and the other by backward recursion. Typically, the forward recursion is performed first, however the entire time history up to the desired time must be accessible during the backward recursion. For small scale problems, one typically stores the entire time history. However, for large-scale problems arising from finely discretized problems the storage requirements may be so large that it may not be able to be stored in RAM so one must resort to disk storage, in which case memory access then becomes the limiting factor. A standard approach is to use checkpointing~\cite{griewank2000algorithm,symes2007reverse,wang2009minimal} which trades computational complexity (by a factor logarithmic in number of steps) while reducing the memory complexity to a few state buffers (also logarithmic in number of steps).

Using the Laplace transform approach to compute the solution of the forward problem at time $t$ avoids the expensive computation of the time history. In this section, we derive expressions for computing the sensitivity of the measurements with respect to the parameters also using the Laplace transform. In addition to the solution of the forward problem $\phi(\bx,t)$, the algorithm requires computing an adjoint field $\psi(\bx,t)$ solving for which also leads to a shifted system of equations, and can be efficiently done using the flexible Krylov subspace method described in section~\ref{sec:krylov}.

We now derive expressions for the sensitivity. Let $S_s$ be parametrized by $s_S$ and $\kappa$ be parametrized by $s_K$. Since we want the reconstruction of $S_s$ and $\kappa$ to be positive, it is common to consider a log-transformation $S_s = e^{s_S}$ and $\kappa = e^{s_K}$. Consider the functional that we would like to compute the sensitivity of 
\[ I(t;s_K, s_S) \define \int_\Omega \delta(\bx-\bx_m) \phi(\bx,t) d\bx \]
which corresponds to a point measurement at the measurement location. We are interested in computing the sensitivity of the functional $I$ with respect to the parameters $ s_K$ and $s_S$. We start by making the following transformation 
\[ I(t;s_K, s_S)  = \frac{1}{2\pi i} \int_\Omega  \int_\Gamma e^{zt}\delta(\bx-\bx_m) \hat{\phi}(\bx,z) dz d\bx \]
To compute the variation $\delta I$ with respect to the functions $ s_K$ and $s_S$, we use the standard adjoint field approach. First, take the Laplace transform of Equations~\eqref{eqn:timedomain} and multiply throughout by test functions $\hat\psi(\bx,z)$, then integrate by parts and apply appropriate boundary conditions to get, 
\[ \int_\Omega e^{s_K} \nabla \hphi\cdot  \nabla\hpsi d\bx + z\int_{\partial \Omega_w} S_y \hphi\hpsi d\bx + z \int_\Omega e^{s_S} \hphi\hpsi d\bx = \int_\Omega \hat{q} \hpsi d\bx \]
Taking the variation $\delta \hphi$, we have 
\begin{align}
\nonumber \int_\Omega e^{s_K} \nabla \delta \hphi\cdot  \nabla\hpsi d\bx + z\int_{\partial \Omega_w} S_y \delta \hphi\hpsi d\bx + z \int_\Omega e^{s_S} \delta \hphi\hpsi d\bx  &  \\ = \quad - \int_\Omega \left( e^{s_K} \delta s_K\nabla \hat{\phi}\cdot \nabla \hat\psi+ z e^{s_S}\delta s_S \hat{\phi}\cdot \nabla \psi\right) d\bx \label{eqn:senstemp} 
\end{align}
The adjoint field $\hpsi$ is chosen to satisfy the following set of differential equations, which is similar to the Laplace transformed version of Equations~\eqref{eqn:timedomain} with the forcing term corresponding to a measurement operator at the measurement location $\bx_m$. 
\begin{align}\label{adjoint}
- \nabla \cdot \left(e^{s_K}\nabla \hat{\psi}(\bx,z) \right) + z e^{s_S} \hat{\psi}(\bx,z) \quad = & \quad - \delta (\bx-\bx_m) &\bx & \in \Omega   \\ \nonumber
\hat{\psi}(\bx,z) \quad = & \quad 0, & \bx & \in \partial \Omega_D  \\ \nonumber 
\nabla \hat{\psi}(\bx,z) \cdot {n} \quad  = & \quad 0, & \bx &\in \partial \Omega_N  \\\nonumber 
 e^{s_K}\nabla \hat{\psi}(\bx,z)\cdot {n} = & \quad -z S_y\hpsi  , & \bx &\in \partial \Omega_w
\end{align}
Similarly, we multiply equation~\eqref{adjoint} by $\delta \hphi$ and integrate by parts to obtain, 
\begin{align*}
\int_\Omega e^{s_K} \nabla \delta \hphi\cdot  \nabla\hpsi d\bx + z\int_{\partial \Omega_w} S_y \delta \hphi\hpsi d\bx + z \int_\Omega e^{s_S} \delta \hphi\hpsi d\bx  = \int_\Omega \delta(\bx-\bx_m) \delta \hat{\phi}(x,z) \, dx 
\end{align*}
Equating the right hand sides from the Equation~\eqref{eqn:senstemp} and the equation above, we get 
 \begin{align} \delta I = & \quad \frac{1}{2\pi i} \int_\Gamma\int_\Omega \delta(\bx-\bx_m) \delta \hat{\phi}(x,z) dz \, dx \\
 = & \quad \frac{1}{2\pi i} \int_\Gamma \int_\Omega e^{zt} \left( e^{s_K} \delta s_K \nabla \hat{\phi}\cdot \nabla \hat\psi  + z e^{s_S} \delta s_S \hat{\phi}\cdot \hat\psi \right)dz d\bx
 \end{align} 
Finally, the integral w.r.t $z$ can be discretized using the same contour integral approach described in Section~\ref{sec:contour}  
\begin{align} 
\delta I_{N_z} =  & \quad \sum_{k=1}^{N_z} w_k\int_\Omega  \left( e^{s_K} \delta s_K \nabla \hat{\phi}_k\cdot \nabla \hat\psi_k + e^{s_S}\delta s_S \hat{\phi}_k\cdot  \hat\psi_k \right)\, dx
\end{align}
As an example of this calculation, we plot the sensitivity at $t = 5$ [min] (made dimensionless) with respect to log transmissivity. The field here was chosen to be a constant field, hence the symmetry observed between the measurement location $(40,50)$ and the point source $(50,50)$. The results are shown for time $t = 5$ [min]. This is displayed in Figure~\ref{fig:sens}.

\begin{figure}
\centering
\includegraphics[scale = 0.4]{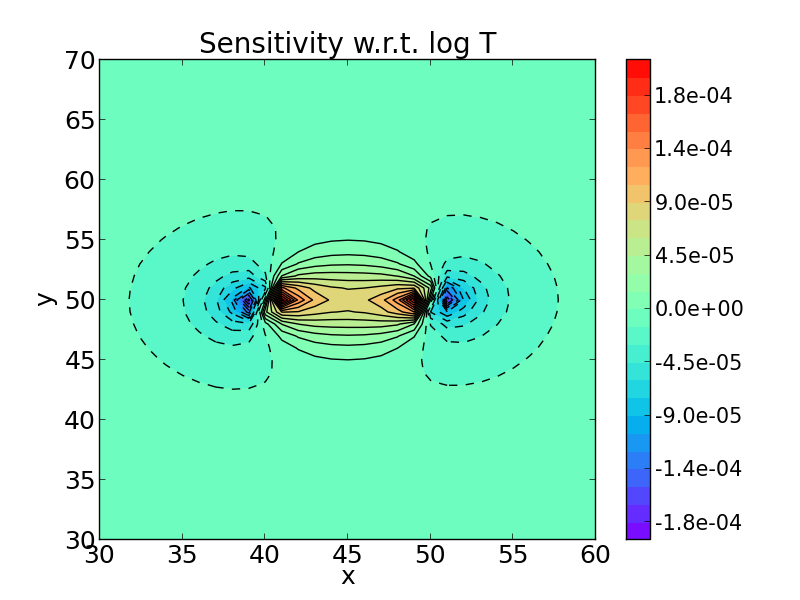}
\caption{Sensitivity (made dimensionless) with respect to log transmissivity. The field here was chosen to be a constant field, hence the symmetry observed between the measurement location $(40,50)$ and the point source $(50,50)$.} \label{fig:sens}
\end{figure}

\subsection{Application: Transient Hydraulic Tomography}
The objective is to reconstruct log transmissivity of an aquifer given measurements of the hydraulic head at several observation locations. The governing equations are the groundwater equations for a $2$D confined aquifer, as described in Section \ref{sec:results}. The `true' field is chosen to be that generated by the exponential kernel as shown in Figure \ref{examplefields} also provided in Equation~\eqref{eqn:exp}. The parameters $R$ and $X$ are chosen to be $R = 10^{-7}I$ and $X = [1,\dots,1]^T$ respectively. We do not investigate the optimality of these parameters, i.e. those that yield the best reconstruction, since the goal was to demonstrate the performance of our proposed solver in solving inverse problems. We additionally added model error by generating measurements using Crank-Nicolson to solve the governing equations using the `true' log transmissivity field. We have three measurement points: at $t = 8, 10$ and $20$ [min] and $36$ measurement locations spaced evenly in the square $[20,80] \times [20,80]$. We introduce a $2\%$ error in the measurements. The inverse problem is solved using these measurements. Storativity was held constant at $S_s = 10^{-5}$ [-] and the domain was a square domain of length $L = 100$ [m]. The single pumping source, located at $(50,50)$, has a pumping rate of $0.85$ L/s. We use the flexible Krylov solver with $N_z = 20$ for solving the forward and adjoint problems on a problem with $101 \times 101$ grid points. The results are shown in Figure \ref{fig:inversion}. The error in the relative L$_2$ norm is $0.16$ within the area of measurements, i.e. the $[20,80] \times [20,80]$ m$^2$ box, and the total error in the relative L$_2$ norm is $0.36$ for the whole aquifer.

Figure~\ref{fig:comparison} shows the time required for the various solvers. `Direct' refers to solving each of the shifted systems using a direct solver, and FGMRES-Sh is the solver using $2$ preconditioners per time. $N_z$ was chosen to be $20$. Note that the slope of the graph corresponding to Crank-Nicolson is dependent on the particular application and on the time step used. The efficiency of Crank-Nicolson relative to FGMRES-Sh depends on the time at which the solution is to be evaluated as well as the size of the time steps taken. For our application, the speedup using FGMRES-Sh as opposed to Crank-Nicolson to solve the forward problem was significant, as demonstrated in the figure. The computational gain is more dramatic in the calculation of the Jacobian since this solution is required for multiple sources and receivers.

\begin{figure}
\centering
\includegraphics[scale = 0.35]{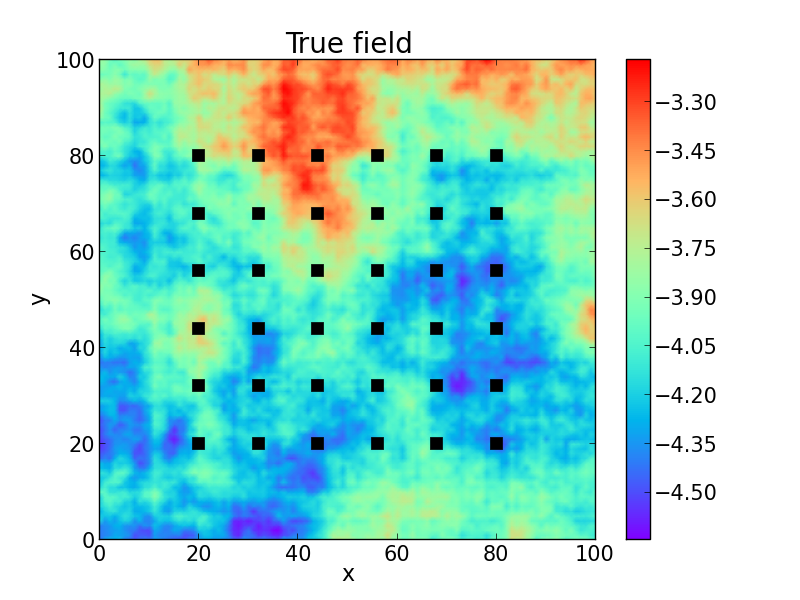}
\includegraphics[scale = 0.35]{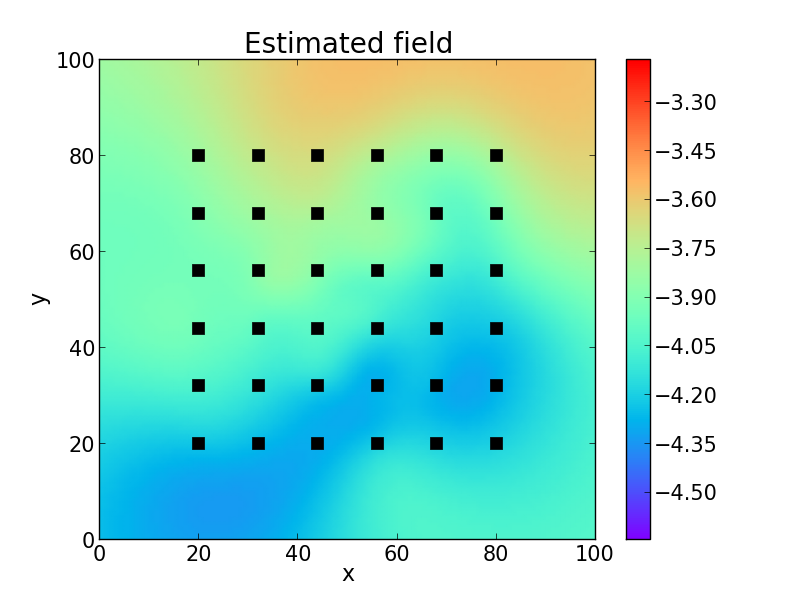}
\caption{True transmissivity field (left) and the estimates transmissivity field (right). The number of grid points is $101,124$. The error in the relative L$_2$ norm is $0.11$ within the area of measurements, i.e. the $[20,80] \times [20,80]$ m$^2$ box, and the total error in the relative L$_2$ norm is $0.35$ for the whole aquifer.
} \label{fig:inversion}
\end{figure}

\begin{figure}
\centering
\includegraphics[scale = 0.4]{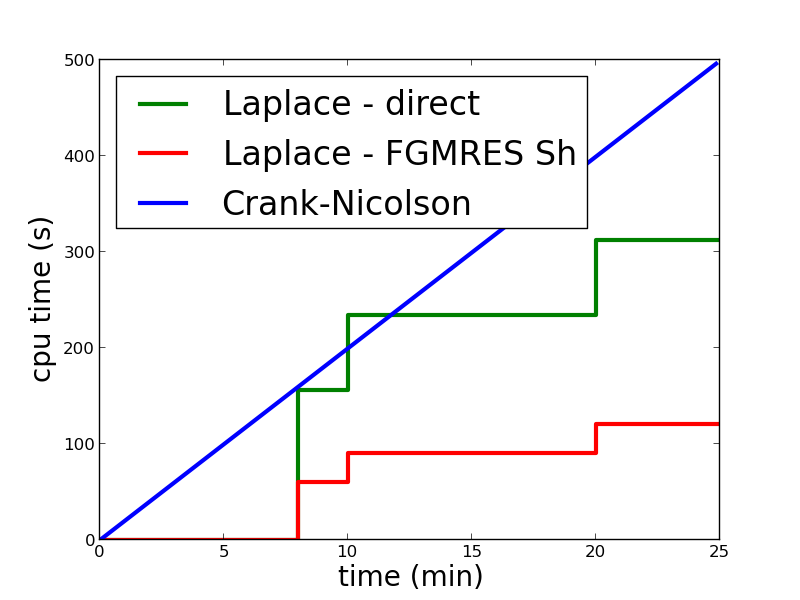}
\caption{Comparison between Crank-Nicolson and the Laplace Transform method using direct solver and GMRES for shifted systems. Note that the slope of the line corresponding to Crank-Nicolson depends on the time step that is used.} \label{fig:comparison}
\end{figure}

Let us assume that we have $N_s$ sources, $N_m$ receivers and collect $N_T$ time measurements. The total number of systems that need to be solved are $N_sN_mN_T$. Assuming the application of the preconditioner can be represented by $\mu(N)$ and the number of iterations using the Krylov solvers are $N_\text{iter}$. The total cost of computing the Jacobian is $N_mN_sN_\text{iter}N_T\mu(N)$. Using a time-stepping scheme with $N_t$ time steps (and therefore, the solution of $N_t$ system of equations) similarly costs $N_mN_sN_t\mu(N)$. The costs are comparable when $N_t \sim N_{iter}N_T$. In Section~\ref{sec:costs} we that for increasing accuracy, the number of systems to be solved increase only logarithmically for Laplace transform-based methods whereas it has a square-root growth for time-stepping schemes. For a few time measurement points $N_T$ or when the number of time steps required by the time-stepping schemes is large, we expect that the Laplace transform-based methods are much more efficient. This is also confirmed by Figure~\ref{fig:comparison}.

\section{Conclusion and future work}\label{sec:conc}

We have proposed a fast method for solving large-scale nonlinear inverse problems based on parabolic PDE. While we have focused on THT, it can be applied to a general class of time-dependent PDEs, for which the Laplace transform can be applied. The resulting system of equations are solved efficiently using a flexible Krylov approach, previously used to accelerate oscillatory hydraulic tomography. For small number of measurement times, our solver is computationally more efficient than standard time-stepping schemes, especially when very small time steps required for stability. We have applied the solver to synthetic problems arising in THT. Since the computation of the Jacobian is often the bottleneck in solving large-scale inverse problems, our approach for computing the Jacobian based on the Laplace transform greatly improves the storage and computational cost.

There are several avenues for future work. In our work, the measurement times are considered known a priori and are few in number. However, the number of measurement times and the sampling rate can be chosen to better improve the accuracy of the reconstruction. For example, choosing the sampling points as the roots of Laguerre polynomials may be used to improve the accuracy of the integral corresponding to the data mismatch. Regarding the flexible Krylov solver, theoretic insight into the properties of the rational Krylov subspace generated $U_m$ can better guide the user to pick preconditioners to efficiently precondition the shifted systems, including those corresponding to multiple times. It remains to be demonstrated, if based on our approach, we can develop a solver that can solve for the entire time history. Another computational bottleneck we would like to consider is the solution of the shifted system in Equation~\eqref{eqn:multipleshifted} with multiple right hand sides, corresponding to different source and measurement locations. One may either try block approaches, or recycling strategies for systems with multiple shifts and multiple right hand sides. 

\section{Acknowledgments}

This work was supported by NSF award 1215742, ``Collaborative Research: Fundamental Research on Oscillatory Flow in Hydrogeology." We would also like to thank James Lambers and Anil Damle for their careful reading of the manuscript and useful suggestions. 

\bibliographystyle{plain}
\bibliography{main.bib}

\end{document}